\newtheorem{prop}{Proposition}
\newcommand{\rev}[2][]{\textcolor{black}{ #2}}
\title{\LARGE \bf Balanced Reduced-Order Models \\for Iterative Nonlinear Control of Large-Scale Systems}
\author{Yizhe Huang and Boris Kramer
\thanks{Y. Huang, was with the Department of Computer Science and Engineering,
        University of California San Diego. He is now with the Department of Information, Risk and Operations Management, University of Texas at Austin, TX 78712 (e-mail: yizhehuang@utexas.edu)}
\thanks{B. Kramer is with the Department of Mechanical and Aerospace Engineering, University of California San Diego, (e-mail: bmkramer@ucsd.edu)}%
}
\begin{document}
\maketitle
\thispagestyle{empty}
\pagestyle{empty}

%%%%%%%%%%%%%%%%%%%%%%%%%%%%%%%%%%%%%%%%%%%%%
\begin{abstract}
We propose a new framework to design controllers for high-dimensional nonlinear systems. The control is designed through the iterative linear quadratic regulator (ILQR), an algorithm that computes control by iteratively applying the linear quadratic regulator on the local linearization of the system at each time step. 
Since ILQR is computationally expensive, we propose to first construct reduced-order models (ROMs) of the high-dimensional nonlinear system. We derive nonlinear ROMs via projection, where the basis is computed via balanced truncation (BT) and LQG balanced truncation (LQG-BT). 
Numerical experiments are performed on a semi-discretized nonlinear Burgers equation. We find that the ILQR algorithm produces good control on ROMs constructed either by BT or LQG-BT, with BT-ROM based controllers outperforming LQG-BT slightly for very low-dimensional systems. 
\end{abstract}

\begin{IEEEkeywords}
Model/controller reduction, iterative learning control, large-scale systems, distributed parameter systems, fluid flow systems.
\end{IEEEkeywords}

%%%%%%%%%%%%%%%%%%%%%%%%%%%%%%%%%%%%%%%%%%%%%%%%%%%%%%%%%
\section{INTRODUCTION}
%%%%%%%%%%%%%%%%%%%%%%%%%%%%%%%%%%%%%%%%%%%%%%%%%%%%%%%%%
\IEEEPARstart{H}{igh}-dimensional nonlinear models are ubiquitous in science and engineering, arising, e.g., during semi-discretization of partial differential equations (PDEs), or in large-scale circuits and biological models. Real-time control and filtering for these systems remains challenging. On the one hand, using linearization and subsequently linear controllers, such as the linear quadratic regulator (LQR), leads to suboptimal control for nonlinear systems. On the other hand, while nonlinear controllers can produce better control, they quickly become computationally intractable in high dimensions. The use of nonlinear reduced-order models (ROMs) is a viable alternative in these applications. 

We propose a new framework to design controllers for high-dimensional nonlinear systems. The approach combines system-theoretic model reduction to obtain nonlinear ROMs and the iterative linear quadratic regulator (ILQR) control~\cite{li2004iterative}. ILQR computes local linearizations at every time step, and combines it with a modified, discrete-time LQR algorithm to find the optimal controller and the corresponding nominal trajectory. The next iteration then uses the new trajectory to recompute the local linearizations and repeat the process, resulting in the iterative nature of ILQR. 

The ILQR algorithm has been applied to a wide range of \textit{very low-dimensional} problems such as quadrotor UAV with cable-suspended loads~\cite{alothman2016quadrotor,alothman2017using}, lower limb exoskeleton models~\cite{sergey2016comparative}, and robotic arm models~\cite{li2004iterative}.
Moreover, several extensions to ILQR have been proposed. An iterative LQG method is developed in \cite{todorov2005generalized}, and \cite{van2014iterated} extends ILQR to systems with non-quadratic cost functions and apply it to differential-drive robots and quadrotor helicopters in environments with obstacles. The authors in \cite{banijamali2018robust} propose a way of learning local linearization of dynamics from image inputs and tests it on problems like planar systems (navigation), inverted pendulum, and cart-pole balancing. In addition, \cite{chen2017constrained} proposes constrained iterative LQR to handle the constraints in ILQR and applies it to on-road autonomous driving motion planning. 
However, while these extensions further expand the range of problems that ILQR can be applied to, none of these extensions addresses the problem that ILQR is computationally intractable for high-dimensional systems. This results from the running time of each iteration in ILQR increases quadratically with the number of dimensions.

Model reduction provides a mathematical and system-theoretic framework to reduce the dimensionality of models, see \cite{antoulas2005approximation, quarteroni2014reducedorder, benner2017modelreduction} for an overview. Here, we are interested in designing the controller using the nonlinear ROM, and using that very control on the original high-dimensional system.
One of the most popular system-theoretic model reduction technique for linear open-loop systems is the balanced truncation (BT) method \cite{moore1981principal}. It finds a state-space transformation in which the controllability and observability Gramians are equal and diagonal, and  then discards the states of the system that are hard to control and hard to observe. 
LQG balanced truncation (LQG-BT) \cite{jonckheere1983new} is a modification to BT; it produces ROMs that are suitable for closed-loop systems~\cite{king2006reduced, singler2009comparison, breiten2015feedback,benner2015lqg}. The LQG-BT algorithm does take the matrices of the control cost function into account in that it solves the LQG Riccati equations instead of the Lyapunov equations. In this work, we compare the performance of ILQR controllers based on nonlinear ROMs, which we compute via BT and LQG-BT model reduction. Our numerical testbed is the semi-discretized high-dimensional Burgers equation. 

This paper is organized as follows. Section~\ref{sec:ILQR} briefly introduces the ILQR algorithm, presents algorithmic complexity results, and the special case of quadratic-in-state systems. Section~\ref{sec:ROM} presents BT and LQG-BT model reduction. Section~\ref{sec:Numerics} numerically investigates the performance of the BT and LQG-based controllers. Section~\ref{sec:conclusions} offers conclusions and an outlook for future work.

%%%%%%%%%%%%%%%%%%%%%%%%%%%%%%%%%%%%%%%%%%%%%%%%%%%%%%%%%
\section{ITERATIVE LINEAR QUADRATIC REGULATOR} \label{sec:ILQR}
%%%%%%%%%%%%%%%%%%%%%%%%%%%%%%%%%%%%%%%%%%%%%%%%%%%%%%%%%
We define the nonlinear control problem in Section~\ref{sec:nonlinear_control_problem}. Then, in Section~\ref{sec:ILQR_alg} we introduce the ILQR method, first developed in \cite{li2004iterative}, which extends LQR to nonlinear systems. In Section~\ref{sec:costILQR}  we derive a computational cost analysis of ILQR, and in Section~\eqref{sec:ILQR_quadraticSystems} we specifically analyze quadratic-in-state systems in combination with ILQR.

%%%%%%%%%%%%%%%%%%%%%%%%%%%%%%%%%%%%%%%%%%%%%%%%
\subsection{Nonlinear Control Problem}\label{sec:nonlinear_control_problem}
We consider the nonlinear control problem of minimizing a quadratic cost subject to a nonlinear dynamical system: 
\begin{equation} \label{eq:NonlProb}
\begin{aligned}
\min_{u_0, u_1, \ldots, u_{N-1}} & \rev{J(x_{0:N}, u_{0:N-1} )}  \\
\text{s.t.} 	 \quad				& x_{k+1} = f(x_k, u_k).
\end{aligned}
\end{equation}
where $x_k \in \mathbb{R}^{n}$ is the system state at time step $k$, $u_k \in \mathbb{R}^{m}$ is the control input at time step $k$, $N$ is the number of time steps, \rev{$x_{0:N} = \{ x_0, x_1, \ldots, x_N \} $ and $u_{0:N-1} = \{ u_0, u_1, \ldots, u_{N-1}\}$ denote the sequence of states and controls, respectively,} and $f: \mathbb{R}^{n} \times \mathbb{R}^{m} \rightarrow \mathbb{R}^{n}$ is a differentiable function in both arguments that maps the current state and control to the future state. 
The cost function is assumed in quadratic form
\begin{equation}\label{eq:cost}
\rev{J(x_{0:N}, u_{0:N-1} )} = (x_N-x^*)^{\top}Q_f(x_N-x^*)+\sum_{k=0}^{N-1}(x_k^{\top}Qx_k+u_k^{\top}Ru_k),
\end{equation}
where $x^*\in \mathbb{R}^n$ is the target state. The symmetric positive semi-definite matrix $Q_f \in \mathbb{R}^{n \times n}$ defines the final state cost, and the symmetric positive semi-definite matrix $Q \in \mathbb{R}^{n \times n}$ defines the intermediate state cost, while the positive definite matrix $R \in \mathbb{R}^{m \times m}$ defines the control cost.

%%%%%%%%%%%%%%%%%%%%%%%%%%%%%%%%%%%%%%%%
\subsection{The ILQR Algorithm}\label{sec:ILQR_alg}
The ILQR algorithm provides a solution for the nonlinear control problem in \eqref{eq:NonlProb}. The iterative algorithm starts with a nominal control sequence \rev{$u_{0:N-1}$} and nominal trajectory \rev{$x_{0:N}$} that results from that input. The algorithm then improves \rev{$u_{0:N-1}$} at each iteration so to minimize~\eqref{eq:cost} while using local linearizations at every time step. We refer to \cite{li2004iterative} for details. 
ILQR iteratively computes a local linearization 
\begin{equation}\label{eq:state}
\delta x_{k+1} = A_k \delta x_k + B_k \delta u_k
\end{equation}
given a nominal control sequence \rev{$u_{0:N-1}$} and corresponding trajectory \rev{$x_{0:N}$}, where $\delta x_k$ and $\delta u_k$ denote the deviation from the nominal trajectory, while $A_k = D_xf(x_k, u_k)$ and $B_k = D_uf(x_k, u_k)$ are the Jacobians of $f$ with respect to $x$ and $u$, respectively.
The algorithm then uses the local linearization in a modified LQR algorithm to compute a local optimal control, which is
\begin{equation}\label{eq:computedeltau}
\delta u_k = -\rev{K_k}\delta x_k - \rev{K_k}^v v_{k+1} - \rev{K_k^u}u_k,
\end{equation}
where $\rev{K_k^v} = (B^{\top}_kS_{k+1}B_k+R)^{-1}B^{\top}_k$ and $\rev{K_k^u} = (B^{\top}_kS_{k+1}B_k+R)^{-1}R$, and where 
\begin{align*}
\rev{K_k} &= (B^{\top}_kS_{k+1}B_k+R)^{-1}B^{\top}_kS_{k+1}A_k \\
\rev{S_k} &= A^{\top}_kS_{k+1}(A_k-B_kK_k)+Q \\
\rev{v_k} &= (A_k-B_kK_k)^{\top}v_{k+1}-K_k^{\top}Ru_k+Qx_k.
\end{align*}
Observe that to compute $S_k$ and $v_k$ we need $S_{k+1}$ and $v_{k+1}$, yielding a backward-in-time iteration. In addition, we have that $S_N = Q_f$, $v_N = Q_f(x_N - x^*)$. This control is then used to compute the nominal trajectory in the next iteration, and the process proceeds until convergence. 
Algorithm~\ref{alg:ILQR} summarizes the ILQR algorithm. We terminate the algorithm when the relative difference of cost between two consecutive iterations is small, $| J_{\text{old}} - J | / J_{\text{old}} \leq \text{tol}$. 

\begin{algorithm}\caption{ILQR Algorithm}\label{alg:ILQR}
	\SetAlgoLined
	\textbf{Input}: Nonlinear term $f(x_k, u_k)$, initial state $x_0$, target $x^*$, initial control sequence \rev{$u_{0:N-1}$}, number of time steps $N$, matrices in cost function $Q$, $Q_f$, $R$, convergence threshold \texttt{tol}\;
	\textbf{Output}: Control \rev{$u_{0:N-1}$}, number of ILQR iterations $l$\;
	$l = 0$, $S_N = Q_f$, $\delta x_0 = 0$\;
	$x_{k+1} = f(x_k, u_k)$, $\forall k = 0, 1, \ldots, N-1$\;
	$J_{\text{old}}=1, \ J=1+2\texttt{tol}$\;
	\While{$|J_{\text{old}} - J| / J_{\text{old}} > \texttt{tol}$} {
		$J_{\text{{old}}} = J $\;
		$J=0$\;
		$v_N = Q_f(x_N - x^*)$\;
		$x_{1:N}^{\text{{old}}} = x_{1:N}$\;		
		\For{$ k \gets 0$ \KwTo $N-1$}{
			$A_k = D_xf(x_k, u_k)$, $B_k = D_uf(x_k, u_k)$
		}
		\For{$ k  \gets N-1$ \KwTo 0}{
			$\rev{K_k} = (B^{\top}_kS_{k+1}B_k+R)^{-1}B^{\top}_kS_{k+1}A_k$\;
			$\rev{K_k^v} = (B^{\top}_kS_{k+1}B_k+R)^{-1}B^{\top}_k$\;
			$\rev{K_k^u} = (B^{\top}_kS_{k+1}B_k+R)^{-1}R$\;
			$S_k = A^{\top}_kS_{k+1}(A_k-B_k \rev{K_k})+Q$\;
			$v_k = (A_k-B_k\rev{K_k})^{\top}v_{k+1}-\rev{K_k^{\top}}Ru_k+Qx_k$\;
		}
		\For{$k \gets 0$ \KwTo $N-1$}{
			$\delta u_k = -\rev{K_k}\delta x_k - \rev{K_k^v} v_{k+1} - \rev{K_k^u}u_k$\;
			$u_{k+1} = u_k + \delta u_k$\;
			$x_{k+1} = f(x_k, u_k)$\;
			$\delta x_{k+1} = x^{\text{{old}}}_{k+1} - x_{k+1}$\;
			$J = J + x_{k+1}^{\top}Qx_{k+1} + u_{k+1}^{\top}Ru_{k+1} $\;
		}
		$J = J + x_N^{\top}Q_fx_N $, \ $l = l+1$\;
	}
\end{algorithm}

%%%%%%%%%%%%%%%%%%%%%%%%%%%%%%%%%%%%%%%%%%%%%%%%%%%%%%%%%
\subsection{Computational Cost of ILQR} \label{sec:costILQR}
In each iteration, Algorithm~\ref{alg:ILQR} completes three portions. First, we obtain the linearizations (step 11-13); second, we compute $\rev{K_k, K^v_k, K^u_k}, S_k, v_k$ (step 14-20); third, we compute the control increment \rev{$(\delta u)_{0:N-1}$} and the new trajectory \rev{$x_{0:N}$} (step 21-27).
Linearizations are obtained in $N$ Jacobian calls if it is known analytically (e.g., for polynomial models). If we need to approximate the Jacobian numerically, then we have to march the system equation once for each dimension of state and control, which requires evaluating $f(x_k, u_k)$ at total of $N(n+m)$ times.
The second and third portion require matrix multiplication and inversion. State-of-the-art algorithms of those operations have complexity $\mathcal{O}(n^{2.4})$\cite{COPPERSMITH1990251}, whereas classical methods scale as $\mathcal{O}(n^3)$, so the complexity of the second and third pass is at best $\mathcal{O}(N(n^{2.4}+m^{2.4}))$. 
 
\begin{prop} \label{prop:complexity}
	The complexity of the $l$th iteration of the ILQR Algorithm~\ref{alg:ILQR} is $\mathcal{O}(N(n^{2.4}+m^{2.4}))$, where $n$ is the state dimension, $m$ is the number of controls, and $N$ is the number of time steps.
\end{prop}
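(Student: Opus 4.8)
The plan is to prove the bound by decomposing one ILQR iteration into the three portions already identified in Section~\ref{sec:costILQR} --- the linearization (steps~11--13), the backward recursion that assembles the gains $K_k, K_k^v, K_k^u$ and the quantities $S_k, v_k$ (steps~14--20), and the forward simulation that produces $(\delta u)_{0:N-1}$ and the new trajectory $x_{0:N}$ (steps~21--27) --- bounding the per-time-step cost of each portion and then summing over the $N$ steps. The key preliminary observation is that every matrix appearing in the algorithm has both dimensions bounded by $\max(n,m)$: indeed $A_k, S_k, Q, Q_f \in \mathbb{R}^{n\times n}$, $B_k \in \mathbb{R}^{n\times m}$, $R, K_k^u \in \mathbb{R}^{m\times m}$, and $K_k, K_k^v \in \mathbb{R}^{m\times n}$. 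Hence every matrix--matrix product and every matrix inversion in the algorithm can be carried out by embedding the operands into $\max(n,m)\times\max(n,m)$ matrices, so that a single such operation costs $\mathcal{O}(\max(n,m)^{2.4})$ with fast multiplication~\cite{COPPERSMITH1990251}. I would then invoke the elementary inequality $\max(n,m)^{2.4}=\max(n^{2.4},m^{2.4})\le n^{2.4}+m^{2.4}$ to convert each operation's cost into the target form $\mathcal{O}(n^{2.4}+m^{2.4})$.

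For the backward recursion I would go line by line through steps~14--20 and check that each line requires only a fixed number of such matrix operations. For example, $B_k^\top S_{k+1}B_k+R$ is formed by a fixed number of products and one addition and then inverted --- the inverse of an $m\times m$ matrix costing $\mathcal{O}(m^{2.4})$ --- while $S_k=A_k^\top S_{k+1}(A_k-B_kK_k)+Q$ is a fixed number of $n$-dimensional products costing $\mathcal{O}(n^{2.4})$ each, and the $v_k$ update reduces to matrix--vector products of lower order. Since the number of lines and the number of operations per line are both constants independent of $n, m, N$, the per-step cost of the backward pass is $\mathcal{O}(n^{2.4}+m^{2.4})$, and summing over the $N$ steps gives $\mathcal{O}(N(n^{2.4}+m^{2.4}))$.

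The forward pass in steps~21--27 consists of matrix--vector products ($K_k\delta x_k$, $K_k^v v_{k+1}$, $K_k^u u_k$, and the quadratic forms in the cost update), which cost at most $\mathcal{O}(\max(n,m)^2)$ and are therefore dominated, together with one evaluation of $f(x_k,u_k)$ per step. For the linearization portion I would use that the $N$ Jacobians $A_k=D_xf$, $B_k=D_uf$ are available analytically for the polynomial (e.g.\ quadratic-in-state) models of interest, so each is assembled at cost no greater than a single matrix operation. Assuming, as holds for such models, that one evaluation of $f$ together with its Jacobians costs $\mathcal{O}(n^{2.4}+m^{2.4})$, both the forward pass and the linearization contribute $\mathcal{O}(N(n^{2.4}+m^{2.4}))$. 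Adding the three portions, and noting that the sum of three terms of the same order is of that order and that the count is identical for every iteration index $l$, completes the argument.

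The step I expect to require the most care is the treatment of the non-square products and of the cost of evaluating $f$. The gains $K_k, K_k^v$ and the cross term $B_k^\top S_{k+1}A_k$ mix the $n$ and $m$ dimensions, so the clean way to bound them is the $\max(n,m)$-embedding above rather than a direct rectangular-multiplication count; and the bound genuinely relies on the evaluation of $f$ (and hence of its analytic Jacobians) not exceeding $\mathcal{O}(n^{2.4}+m^{2.4})$ per call. If $f$ were an arbitrary black box requiring finite-difference Jacobians, the linearization would instead cost $\mathcal{O}(N(n+m))$ function evaluations and the stated bound would have to be read relative to that evaluation cost; I would flag this assumption explicitly so that the complexity claim is unambiguous.
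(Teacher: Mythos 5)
Your proposal follows essentially the same route as the paper: the identical three-portion decomposition of one iteration (linearization, backward gain recursion, forward pass), a per-step bound of $\mathcal{O}(n^{2.4}+m^{2.4})$ for the matrix products and inversions via fast multiplication~\cite{COPPERSMITH1990251}, and a sum over the $N$ time steps. Your additional care with the rectangular products via the $\max(n,m)$ embedding and your explicit flagging of the assumption that evaluating $f$ and its Jacobians costs no more than $\mathcal{O}(n^{2.4}+m^{2.4})$ are correct refinements of an argument the paper leaves informal.
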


The previous result analyzes  a single iteration of the ILQR algorithm. We note that the number of iterations of the ILQR algorithm $l$ varies significantly, depending on the structure of the nonlinear function $f(x_k,u_k)$, as we see next.
\begin{prop}
	When applied to a discrete-time (DT) LTI system, the ILQR algorithm converges in one iteration. The resulting control is the same as the output of DT-LQR\footnote{Refer to, e.g., \cite[Thm. 6.28]{Kwakernaak1972LinearOC} for DT-LQR control.}.
\end{prop}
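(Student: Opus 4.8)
The plan is to exploit the fact that for a DT-LTI system the local linearization used by ILQR is \emph{exact}. Writing the dynamics as $f(x,u)=Ax+Bu$, the Jacobians are $A_k=D_xf=A$ and $B_k=D_uf=B$, which are constant and independent of the nominal trajectory. Moreover, for any nominal pair $(x_k,u_k)$ and any deviation, $f(x_k+\delta x_k,\,u_k+\delta u_k)=A(x_k+\delta x_k)+B(u_k+\delta u_k)$, so the linearized model~\eqref{eq:state} holds with equality and carries no truncation error. First I would use this to argue that the linear-quadratic subproblem ILQR forms at each iteration---minimize the quadratic cost~\eqref{eq:cost} subject to~\eqref{eq:state}---is identical to the original problem~\eqref{eq:NonlProb}, and that this identification does not depend on the trajectory about which we linearize.

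Next I would verify that ILQR's backward sweep reproduces the DT-LQR solution. Substituting $A_k=A$ and $B_k=B$ into the recursions of Section~\ref{sec:ILQR_alg} gives $K_k=(B^\top S_{k+1}B+R)^{-1}B^\top S_{k+1}A$ and $S_k=A^\top S_{k+1}(A-BK_k)+Q$ with $S_N=Q_f$, which is exactly the discrete-time Riccati recursion; hence $S_k$ and the feedback gain $K_k$ coincide with the DT-LQR quantities in every iteration. It then remains to match the feedforward: I would show that the affine recursion for $v_k$ together with $K_k^v$ and $K_k^u$ in~\eqref{eq:computedeltau} reconstructs the tracking term of the DT-LQR law associated with the target $x^*$ entering through $v_N=Q_f(x_N-x^*)$. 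Concretely, I would write the DT-LQR optimum in affine form $u_k=-K_kx_k+g_k$, with $g_k$ generated by an affine backward recursion driven by $x^*$, and check that the ILQR forward pass reproduces $g_k$.

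Because the cost~\eqref{eq:cost} is already quadratic, the cost quadratization is exact as well, so the combined backward--forward sweep solves the linear-quadratic subproblem \emph{exactly}, and that subproblem is the original problem. I would conclude that the first forward pass returns the global minimizer---the DT-LQR control---regardless of the initial nominal control, since no Gauss--Newton approximation error is incurred when the model and cost are exactly linear-quadratic. Finally, because the subproblem is nominal-independent, relinearizing about this optimal trajectory yields identical matrices, gains and control, so the trajectory and the cost $J$ are unchanged and the stopping test $|J_{\text{old}}-J|/J_{\text{old}}\le\texttt{tol}$ is met; the optimum is therefore reached in a single iteration and is a fixed point of the map.

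The hard part will be the feedforward bookkeeping in the second step: confirming that the nonstandard incremental update~\eqref{eq:computedeltau}, with its separate $v_{k+1}$ and $u_k$ contributions and the gains $K_k^v,K_k^u$, assembles into \emph{exactly} the affine DT-LQR tracking control rather than a merely gain-equivalent law. A minor secondary point is reconciling the counter $l$ with the statement, since the while loop may execute one additional pass purely to detect that $J$ is unchanged, with no control update occurring in that pass.
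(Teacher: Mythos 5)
The paper states this proposition without any proof, so there is nothing to compare against; evaluating your argument on its own merits, it is correct and is the standard one. The essential points are all present: for $f(x,u)=Ax+Bu$ the Jacobians $A_k=A$, $B_k=B$ are trajectory-independent and the linearization \eqref{eq:state} is exact, the cost \eqref{eq:cost} is already quadratic, so the backward sweep in Section~\ref{sec:ILQR_alg} is exactly the DT Riccati/affine recursion and the first forward pass lands on the global LQ optimum; a second pass then produces $\delta u_k=0$, leaves $J$ unchanged, and trips the stopping test. Your two flagged caveats are the right ones to worry about: the feedforward bookkeeping (checking that $-K_k^v v_{k+1}-K_k^u u_k$ added to $u_k$ reproduces the affine tracking term, which amounts to verifying $Ru_k^{\mathrm{new}}=-B^\top v_{k+1}$ at the fixed point) would need to be written out to make the proof complete, and the counter $l$ in Algorithm~\ref{alg:ILQR} will in fact read $2$ when the loop exits even though the control stops changing after one iteration --- a discrepancy with the literal wording of the proposition that is worth stating explicitly rather than leaving as a ``minor secondary point.''
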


%%%%%%%%%%%%%%%%%%%%%%%%%%%%%%%%%%%
\subsection{ILQR for Quadratic-in-State Systems} \label{sec:ILQR_quadraticSystems}
The Euler equations in specific volume form describe a purely quadratic system and are commonly used in aerospace applications. Other models, such as Navier-Stokes and Burgers equations are nearly quadratic for small viscous terms. Thus, we analyze the ILQR algorithm specifically for quadratic-in-state systems of the form:
\begin{equation}
 x_{k+1} = G(x_k\otimes x_k) + Bu_k, \label{eq:quadraticEq}
\end{equation}
where $G\in \mathbb{R}^{n\times n^2}$, $B\in \mathbb{R}^{n\times m}$ and  $\otimes$ denotes the Kronecker product, defined as $ x \otimes x =[ x_k^{(1)} x_k^\top, \ldots, x_k^{(n)} x_k^\top]^\top \in \mathbb{R}^{n^2}$.
\begin{prop}
	Let $Q = 0$ in the cost equation \eqref{eq:cost} (penalty only on the final state). Suppose the initial state is $x_0 = 0$, and the initial control sequence is \rev{$u_{0:N-1}$}.
	Then the ILQR algorithm applied to the quadratic system~\eqref{eq:quadraticEq} takes at least $N$ iterations to converge, where $N$ is the number of time steps.
\end{prop}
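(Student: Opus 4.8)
The plan is to exploit the special structure of the state Jacobian of a purely-quadratic-in-state map and to argue, by induction on the ILQR iteration index, that information about the terminal target can propagate only ``one time step per iteration,'' so that at least $N$ iterations are needed before every control $u_0,\dots,u_{N-1}$ has been able to respond to the target $x^*$.

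First I would record the key structural fact. For $f(x,u)=G(x\otimes x)+Bu$ the input Jacobian $B_k=D_uf(x_k,u_k)=B$ is constant, while the state Jacobian $A_k=D_xf(x_k,u_k)=G\,D_x(x\otimes x)\big|_{x_k}$ is \emph{linear} in $x_k$ and therefore vanishes whenever $x_k=0$. Since $x_0=0$ is fixed, $A_0=0$ in every iteration; more importantly, at any time step where the current nominal state is zero we have $A_k=0$. I would then feed this into the backward recursion. Because $K_k=(B_k^\top S_{k+1}B_k+R)^{-1}B_k^\top S_{k+1}A_k$ carries an explicit factor $A_k$, a vanishing $A_k$ forces $K_k=0$, and then with $Q=0$ the recursions collapse to $S_k=A_k^\top S_{k+1}(A_k-B_kK_k)=0$ and $v_k=(A_k-B_kK_k)^\top v_{k+1}-K_k^\top R u_k=0$. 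In words, a zero nominal state at time $k$ \emph{resets} the cost-to-go data, $S_k=0$ and $v_k=0$, so the terminal information stored in $v_N=Q_f(x_N-x^*)$ cannot be transmitted backward past such a step. Combined with the forward update $\delta u_k=-K_k\delta x_k-K_k^v v_{k+1}-K_k^u u_k$, this shows that a control $u_k$ is pulled toward the target only through a nonzero $v_{k+1}$; where the upstream nominal state is still zero, the corresponding control increment is inert.

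With these observations I would set up the induction. The invariant I would carry is that after $t$ iterations the terminal information has influenced the controls of at most the last $t$ time steps, i.e.\ the nonzero ``frontier'' of the nominal trajectory has advanced by at most one step from the terminal time. The base case $t=1$ follows because, starting from $x_0=0$, the terminal conditions $S_N=Q_f$ and $v_N=Q_f(x_N-x^*)$ make $u_{N-1}$ the only control that can respond: every $v_{k+1}$ with $k+1<N$ has already been zeroed out by an upstream vanishing Jacobian. In the inductive step I would argue that advancing the frontier by one step is the fastest possible: making a new state $x_k$ nonzero requires first having activated $u_{k-1}$, which requires $v_k\neq0$, which requires the Jacobian $A_k$ and hence $x_k$ to already be nonzero; thus each iteration can unlock at most one additional upstream time step. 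Since a converged solution must in general assign all $N$ controls, the iteration count is bounded below by $N$.

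The main obstacle is the inductive step, namely proving that the frontier advances by \emph{no more than} one step per iteration. This upper bound on the propagation speed is delicate because it couples the backward pass (where zero states annihilate $S_k,v_k$) with the forward pass (where the recomputed trajectory determines which states become nonzero, and hence which Jacobians unlock on the next sweep); one must track this coupling carefully, handle the fixed boundary $A_0=0$, and reconcile it with the termination test $|J_{\mathrm{old}}-J|/J_{\mathrm{old}}\le\texttt{tol}$ so that convergence is not declared before the frontier has traversed all $N$ steps. The vanishing-Jacobian fact is the engine that drives the whole argument; the precise accounting of how far information can move per iteration is the part that will require the most care.
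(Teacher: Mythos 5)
Your proposal follows essentially the same route as the paper's proof: both hinge on the fact that $A_k = D_x\bigl(G(x_k\otimes x_k)\bigr) = G(x_k\otimes I_n + I_n\otimes x_k)$ vanishes along the all-zero nominal trajectory, so that with $Q=0$ the backward pass collapses ($K_k=0$, $S_k=0$, $v_k=0$ below the terminal step), only $u_{N-1}$ is updated in the first iteration, and the update front can then move backward by at most one time step per iteration, giving the lower bound of $N$. The inductive step you single out as the delicate part is exactly the point the paper itself disposes of with ``similarly, \ldots and so on,'' so your account is, if anything, more explicit than the paper's about where the careful bookkeeping is required.
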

\begin{proof} 
Since $x_0 =0$ and initial control $u_k =0, k=1, 2, \ldots$, the initial trajectory of the system~\eqref{eq:quadraticEq} is stationary: $x_k = 0$ for $k=0,1,\ldots, N-1$. Likewise, in the local linearization $A_k = D_x(G(x_k\otimes x_k) + Bu_k) = G(x_k\otimes I_n + I_n \otimes x_k)$, therefore, $A_k = 0$ in all local linearizations and $B_k = D_u(G(x_k\otimes x_k) + Bu_k) = B$.
We first analyze the behavior of the ILQR algorithm in the first iteration. There, except $S_N$, all of $S_k=0$, since $A_k = Q = 0$. Therefore, with the exception of step $N-1$, we have $K = 0$, $K_v = R^{-1}B^{\top}$ and $K_u = I$. Therefore $(A_k - B_kK) = 0 \ \forall k$, which means that all $v_k=0$ except $v_N$. Therefore, $\delta u_k = 0$ except $\delta u_{N-1}$. In particular, $\delta u_{N-1} = (B^{\top}Q_fB+R)^{-1}B^{\top}Q_fx^*$. As a result, only $u_{N-1}$ is updated in the first iteration.
Similarly, we find that only $u_{N-2}$ and $u_{N-1}$ are updated in the second iteration, and so on. It takes $N$ iterations before $u_0$ gets updated; thus, it takes at least $N$ iterations for ILQR algorithm to converge.
\end{proof}

The previous theorem points to a computational  challenge. If it takes $O(N)$ iterations to converge and since in each iteration the time complexity is $O(N)$ with respect to the number of time steps (see Proposition~\ref{prop:complexity}) the total time complexity is $O(N^2)$. For stiff problems and long simulation times this becomes intractable. 
To address this problem, it may be helpful to use a coarse time step on the first few iterations and then use finer timer step later to speed up convergence. For example, say $N=10$ for the first few iterations, then all $u_k$ can be updated in $10$ iterations, and we can later switch to $N = 1000$. This strategy can be thought of as using the output from coarse ILQR to initialize fine-grain ILQR. 
Clearly, the use of ROMs also speeds up the algorithm since the time needed for each iteration is reduced significantly.

%%%%%%%%%%%%%%%%%%%%%%%%%%%%%%%%%%%%%%%%%%%%%%%%%%%%%%%%%
\section{SYSTEM-THEORETIC MODEL REDUCTION} \label{sec:ROM}
%%%%%%%%%%%%%%%%%%%%%%%%%%%%%%%%%%%%%%%%%%%%%%%%%%%%%%%%%
We present two system-theoretic model reduction techniques: balanced truncation~\cite{moore1981principal} in Section~\ref{sec:BT}, and LQG-BT~\cite{jonckheere1983new} in Section~\ref{sec:LQGBT}. %For more details, we refer to the thesis~\cite{huang2020thesis}.
Both methods are defined for continuous-time LTI systems $\dot{x} = Ax + Bu, \ y = Cx$ where $y \in \mathbb{R}^{p}$ is the output and the system matrices are $A \in \mathbb{R}^{n \times n}$, $B \in \mathbb{R}^{n \times m}$ and $C \in \mathbb{R}^{p \times n}$. 
We use these methods for discrete-time nonlinear systems as follows. First, we linearize the nonlinear systems and then convert the  system from discrete-time to continuous-time using a bilinear Tustin transformation. We then use BT and LQG-BT to compute the state-space transformations, and use those transformations to reduce the nonlinear systems.
The control inputs keep the system around the equilibrium/linearization point. Thus, using these methods to obtain the projection matrices for the nonlinear ROM is more appropriate than in the open-loop nonlinear setting. More ROM details are given in Section~\ref{sec:NumROM}.
Both concepts are for infinite-time horizon. There are finite-time BT methods~\cite{kurschner2018balanced}, yet we found the infinite-time BT method to outperform the finite-time BT on our test problems.

%%%%%%%%%%%%%%%%%%%%%%%%%%%%%%%%%%%%%%%%%%%%%%
\subsection{Computing the Balanced Truncation Basis}\label{sec:BT}
Balanced truncation computes a state-space transformation for stable systems in such a way that the controllability and observability Gramians of the ROM are equal and diagonal, e.g., only the easy-to-control and easy-to-observe states are kept in the ROM. The controllability Gramian $P$, and the observability Gramian $Q$, satisfy the Lyapunov equations
\begin{equation*}
AP + PA^\top+BB^\top = 0, \qquad  QA + A^{\top}Q+C^{\top}C = 0. 
\end{equation*}
Applying the transformation $x = T\tilde{x}$ to the LTI system yields $\dot{\tilde{x}} = \tilde{A}\tilde{x} + \tilde{B}u, \ \tilde{y} = \tilde{C}\tilde{x}$
where $\tilde{A} = T^{-1}AT$, $\tilde{B} = T^{-1}B, \tilde{C} = CT$. The Gramians of the transformed system are $\tilde{P} = T^{-1}PT^{-\top}$ and $\tilde{Q} = T^{\top}QT$.
We want $T$ such that $\tilde{P} = \tilde{Q}= \Sigma$, hence
\begin{equation*}
\Sigma^2 = \tilde{P}\tilde{Q} = T^{-1}PT^{-\top}T^{\top}QT = T^{-1}PQT.
\end{equation*}
Therefore, $T$ can be obtained by computing the eigenvectors of $PQ$ and eigenvalues $\sigma_i^2$ in $\Sigma^2 = \text{diag}(\sigma_1^2, \ldots, \sigma_n^2)$. To ensure $\tilde{P} = \tilde{Q}$, proper scaling of the eigenvectors is required.
The complete balanced truncation method for stable systems in an efficient implementation is given in Algorithm~\ref{alg:BT}. This implementation only computes the leading $r$ columns of $T_r$, and $T_{l}$ and is therefore numerically more stable and efficient. 
\begin{algorithm}[tb!] 
	\caption{Balanced truncation algorithm} \label{alg:BT}
	\begin{algorithmic}[1]
	\REQUIRE Matrices $A \in \mathbb{R}^{n \times n}$, $B \in \mathbb{R}^{n \times m}$ and $C \in \mathbb{R}^{p \times n}$, \\  \ \ \quad ROM dimension $r$
	\ENSURE Transformation matrices $T_r , T_{l}$
	\STATE Solve $AP + PA^\top+BB^\top = 0$ for $P=RR^\top$
	\STATE Solve $QA + A^{\top}Q+C^{\top}C = 0$ for $Q=LL^\top$
	\STATE $r = \min(r, \text{rank}(L), \text{rank}(R))$
	\STATE $U, \Sigma, V = \texttt{svd}(L[:,1:r]^\top R[:,1:r])$
	\STATE $T_r = R[:,1:r]V \Sigma^{-\frac{1}{2}}$
	\STATE $T_{l} = \Sigma^{-\frac{1}{2}}U^\top L[:,1:r]^\top$
	\end{algorithmic}
\end{algorithm}

%%%%%%%%%%%%%%%%%%%%%%%%%%%%%%%%%%%%%%%%%%%%%%%%%%%%%
\subsection{Computing the LQG-Balanced Truncation Basis}\label{sec:LQGBT}
LQG-BT \cite{jonckheere1983new} applies BT to closed-loop (stabilized) systems, and therefore also applies to unstable systems where, however, $(A,B)$ is stabilizable and $(A,C)$ is detectable. It balances the unique positive definite solutions $P,Q$ of the LQG algebraic Riccati equations
\begin{align*}
AP + PA^{\top} - PC^{\top} CP + BB^{\top} &= 0, \\
A^{\top}Q + QA - QBR^{-1}B^{\top} Q + C^{\top}C &= 0.
\end{align*}
LQG-BT follows the same steps as Algorithm~\ref{alg:BT}, except at Steps 1--2 it uses the solutions to the LQG AREs. As a byproduct, we directly obtain LQR or LQG controllers.

%%%%%%%%%%%%%%%%%%%%%%%%%%%%%%%%%%%%%%%%%%%%%%%%%%%%%%%%%
\section{NUMERICAL RESULTS: BURGERS EQUATION} \label{sec:Numerics}
%%%%%%%%%%%%%%%%%%%%%%%%%%%%%%%%%%%%%%%%%%%%%%%%%%%%%%%%%
Burgers equation has been widely used in modeling of fluids and traffic flows, and as a numerical testbed for nonlinear model reduction and control~ \cite{kunisch1999control,atwell2001reduced,kunisch2005pod,kramer2016model}. 

%%%%%%%%%%%%%%%%%%%%%%%%%%%%%%%%%%%%%%%%%%%%%%%%%%%%%%%%%
\subsection{PDE Model and Semi-Discretized Model}
We consider a 1D Burgers equation with the setup as in \cite{borggaard2020quadratic}. The PDE model is
\begin{equation*}%\label{eg:BurgersPDE}
\dot{z}(\xi, t) = \epsilon z_{\xi\xi}(\xi, t) - \frac{1}{2}(z^2(\xi, t))_\xi + \sum_{k=1}^m \chi_{[(k-1)/m,k/m]}(\xi) u_k(t)
\end{equation*}
for $t>0$ and where $\xi$ is the spatial variable, $u_k(t)$ is the distributed control, and $m$ is the dimension of control, $z \in H_{\text{per}}^1(0,1)$, which means that the system has periodic boundary conditions. Moreover, $z(\xi, 0) = z_0(\xi) = 0.5 \sin(2\pi \xi)^2$ for $\xi \in [0,0.5]$ and zero otherwise. The function $\chi_{[a,b]}(x)$ is the characteristic function over $[a,b]$. Thus, the spatial domain is divided into $m$ equidistant intervals, and each control is applied on one corresponding interval.
When discretized with linear finite elements, and after an inversion with the mass matrix, the semi-discretized system is a nonlinear ODE which is quadratic in the state, and linear in the control:
\begin{equation}
\dot{x} =  A x + G (x \otimes x) + B u, \qquad y = Cx, \label{eq:BurgersODE}
\end{equation}
where $G \in \mathbb{R}^{n\times n^2}$, and initial condition $x(0)=x_0$.  
We choose the following parameters for our numerical experiments: $n=101$ states, $m=5$ controls, and viscosity $\epsilon = 5 \times 10^{-3}$ to make the nonlinear quadratic term dominant. The system is simulated for $t_f = 5$s, and the time interval is divided into $N = 500$ time steps. The trajectory is computed using the backward Euler method, which is implemented using the Levenberg-Marquardt algorithm. The output matrix $C = I_n$, the state cost matrices $Q = Q_f = C^{\top}C$, and the input cost matrix is $R = 10^{3} \times I_m$, where $I_n$ is the $n \times n$ identity matrix.

%%%%%%%%%%%%%%%%%%%%%%%%%%%%%%%%%%%%%%%%%%%%%%%%%
\subsection{BT and LQG-BT Reduced-Order Models} \label{sec:NumROM}
We linearize \eqref{eq:BurgersODE} around the zero equilibrium and use the linearized matrices $A_\text{lin}, B, C$ in the BT or LQG-BT Algorithm~\ref{alg:BT} to compute the transformations $T_r, T_{l}$. The linearized matrix $A_\text{lin}$ is stable. We then project the full-order model (FOM) in \eqref{eq:BurgersODE} to obtain
\begin{equation} \label{eq:BurgersROM}
\dot{x}_r = A_r x_r + G_r (x_r \otimes x_r) + B_r u_r, \qquad   y_r = C_rx_r
\end{equation}
where $A_r = T_{l}AT_r$, $B_r = T_{l}B$, $C_r = CT_r$, $G_r = T_{l} G (T_r \otimes T_r)$ and $x_r(0)= T_{l}x_0$. 
Fig.~\ref{fig:Hankel_svals} shows the normalized singular values of the matrix $L^{\top}R$ in the balancing procedure (see step~4 in Algorithm~\ref{alg:BT}) for both BT and LQG-BT. For the standard BT algorithm, those are also known as the Hankel singular values. We observe that the first $20$ Hankel singular values decay fast, and all Hankel singular values except the first one come in pairs. In contrast, the normalized singular values in the LQG-BT decay much more slowly, and only level off at $r>60$. Similar to the Hankel singular values, these singular values except the first one come in pairs. The fact that the singular values in the BT algorithm decay much faster indicates that it is likely that BT outperforms LQG-BT in open-loop simulation accuracy, as when we truncate the transform matrix we are discarding dimensions that have lower associated singular values. 
\begin{figure}[tb]	\centering
	\includegraphics[width=0.4\textwidth]{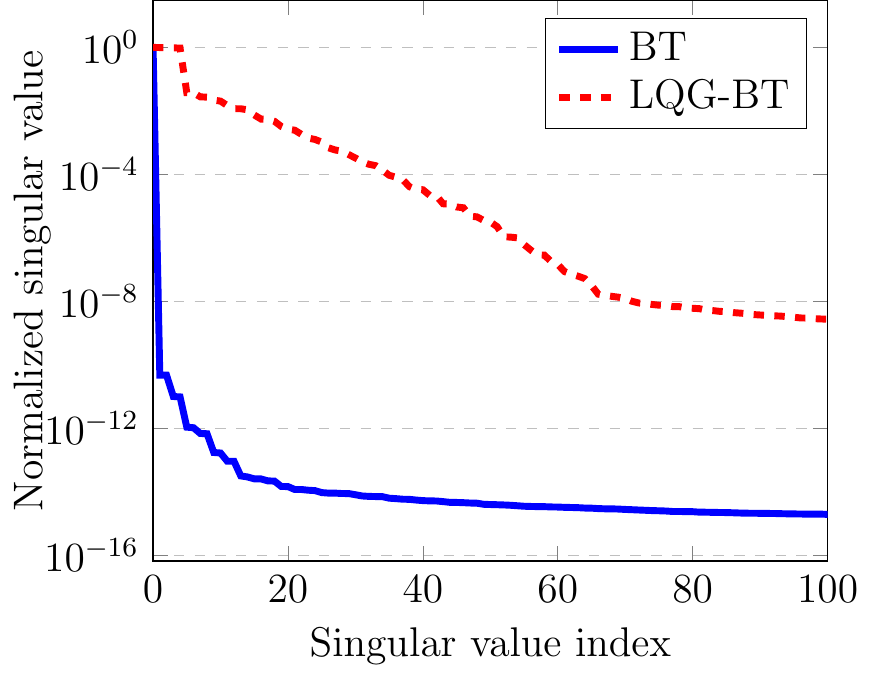}
	\caption{Normalized singular values of the matrix $L^{\top}R$. For balanced truncation, those are the Hankel singular values.}
	\label{fig:Hankel_svals}
\end{figure}

%%%%%%%%%%%%%%%%%%%%%%%%%%%%%%%%%%%%%%%%%%%%%%%%%%%%%
\subsection{Performance of ROM-based ILQR Controllers}
We use the ROMs to compute controllers $u_r^{\text{BT}}$ and $u_r^{\text{LQGBT}}$ via the ILQR Algorithm~\ref{alg:ILQR} with \texttt{tol}=$3\times10^{-5}$. We note that the Jacobians of \eqref{eq:BurgersROM}, which are used to obtain the local linearization of the system in the ILQR algorithm, are $D_{x_r}\dot{x}_r = A_r + G_r (x_r \otimes I_{n_r} + I_{n_r} \otimes x_r)$ and $D_{u_r}\dot{x}_r = B_r$, where $n_r$ is the number of dimensions of the state $x_r$ in the ROM. The availability of the Jacobians in explicit form significantly speeds up the computational routine, and should be used whenever available, c.f. Proposition~\ref{prop:complexity}.

Fig.~\ref{fig:FOM_ROM_Cost} shows the ROM cost $J(x_r, u_r)$ computed as in~\eqref{eq:cost} yet with state $x_r$, controls $u_r$ and cost matrix $Q_r = Q_{f,r} = C_r^\top C_r$, versus the iterations of the ILQR algorithm. Moreover, the figure shows the FOM cost $J(x, u_r)$ from \eqref{eq:cost}  when the  ROM controller is used for the FOM (thin horizontal lines). Both ROMs had dimension $r=5$. Note that the ILQR algorithm converges monotonically for both models terminating requiring 170 iterations with the LQG-BT model and 223 iterations with the BT model. With the final controllers the cost function evaluated for the FOM, $J(x, u_r)$, is 68.9 for BT and 63.6 for LQG-BT showing a slightly better performance of LQG-BT. The norm of the controls obtained from ILQR is  shown in Fig.~\ref{fig:control_norm}, and Fig.~\ref{fig:controlled_FOM_output} shows the corresponding output. Due to the periodic boundary conditions and the large penalty on the control, the state shows a low-magnitude periodic behavior. 

Fig.~\ref{fig:FOMSurface} shows the open-loop behavior of the system when excited with the initial condition $x_0$  and no inputs. The initial condition gets convected in a nonlinear fashion to the right. In contrast, the controlled systems with the ROM-based ILQR controllers are shown in Fig.~\ref{fig:FOMwBTcontrol} for BT and Fig.~\ref{fig:FOMwLQGcontrol} for LQG-BT. In both cases, the control signal (Fig.~\ref{fig:control_norm}) reduces the initial condition significantly, showing the efficacy of the control.

\begin{figure}[tb] \centering
	\includegraphics[width=0.4\textwidth]{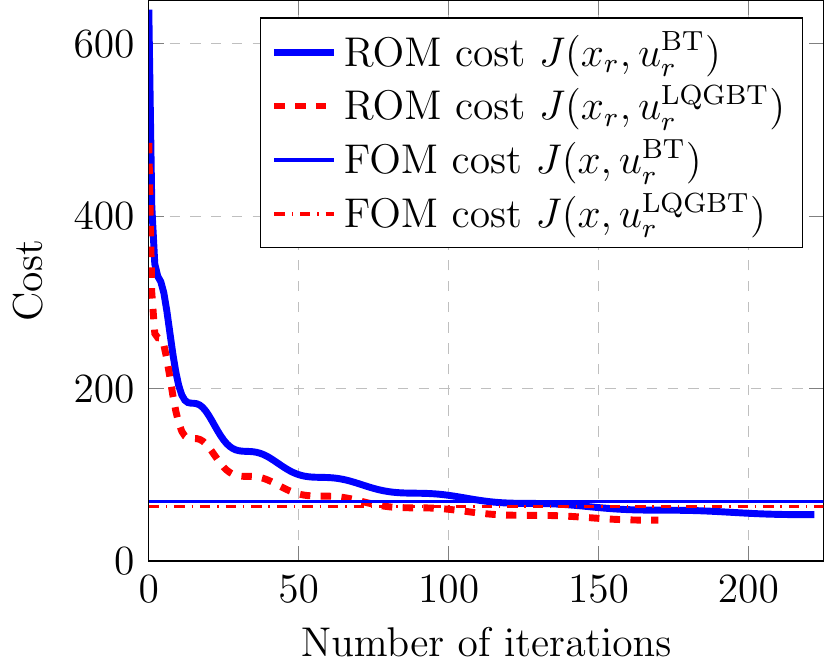}
	\caption{ROM cost vs number of iterations in the ILQR algorithm for $r=5$. The FOM cost when using the converged ILQR controller shown in thin lines, which is 68.9 for BT and 63.6 for LQG-BT.}
	\label{fig:FOM_ROM_Cost}
\end{figure}

\begin{figure}[tb] \centering
	\includegraphics[width=0.4\textwidth]{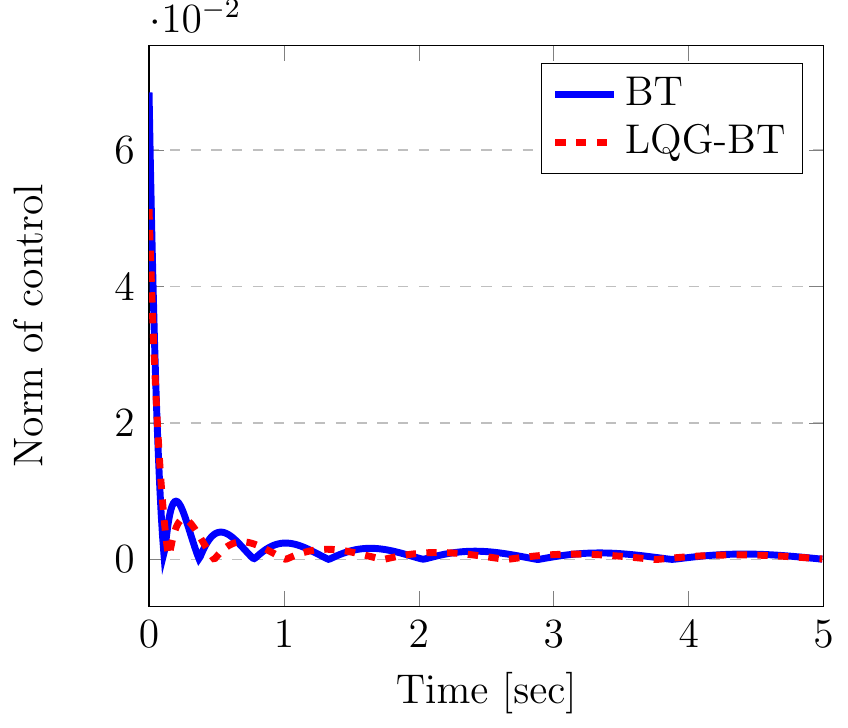}
	\caption{Norm of ILQR control $\Vert u \Vert_2$ computed using the two nonlinear ROMs with $r=5$.}
	\label{fig:control_norm}
\end{figure}

\begin{figure}[tb] \centering
	\includegraphics[width=0.4\textwidth]{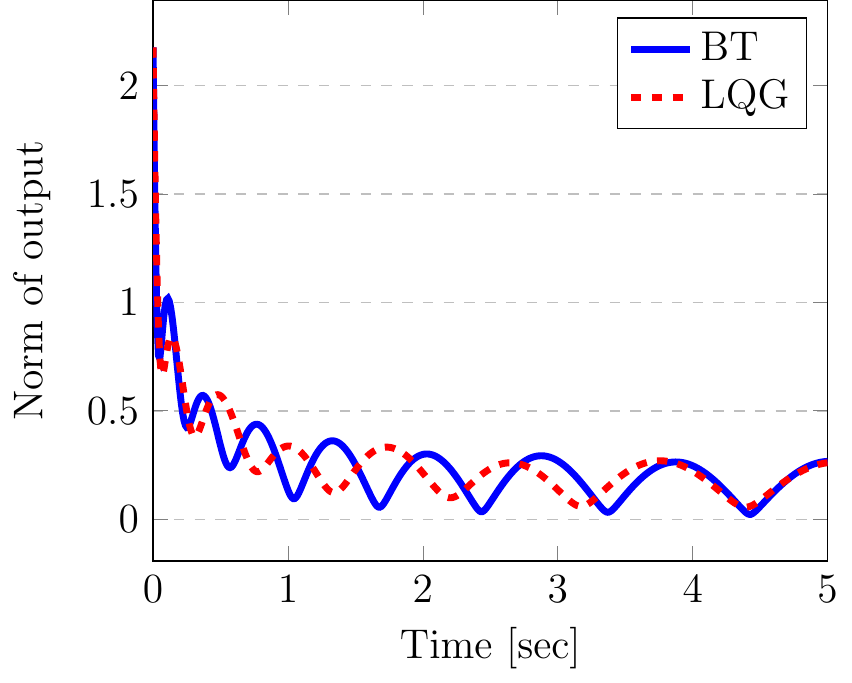}
	\caption{Norm of outputs $\Vert y \Vert_2$ in the closed-loop FOM, with the ILQR controller computed using the two nonlinear ROMs with $r=5$.}
	\label{fig:controlled_FOM_output}
\end{figure}

\begin{figure}[tb] \centering
	\includegraphics[width=.45\textwidth,trim=0 0.5cm 0 1.8cm, clip]{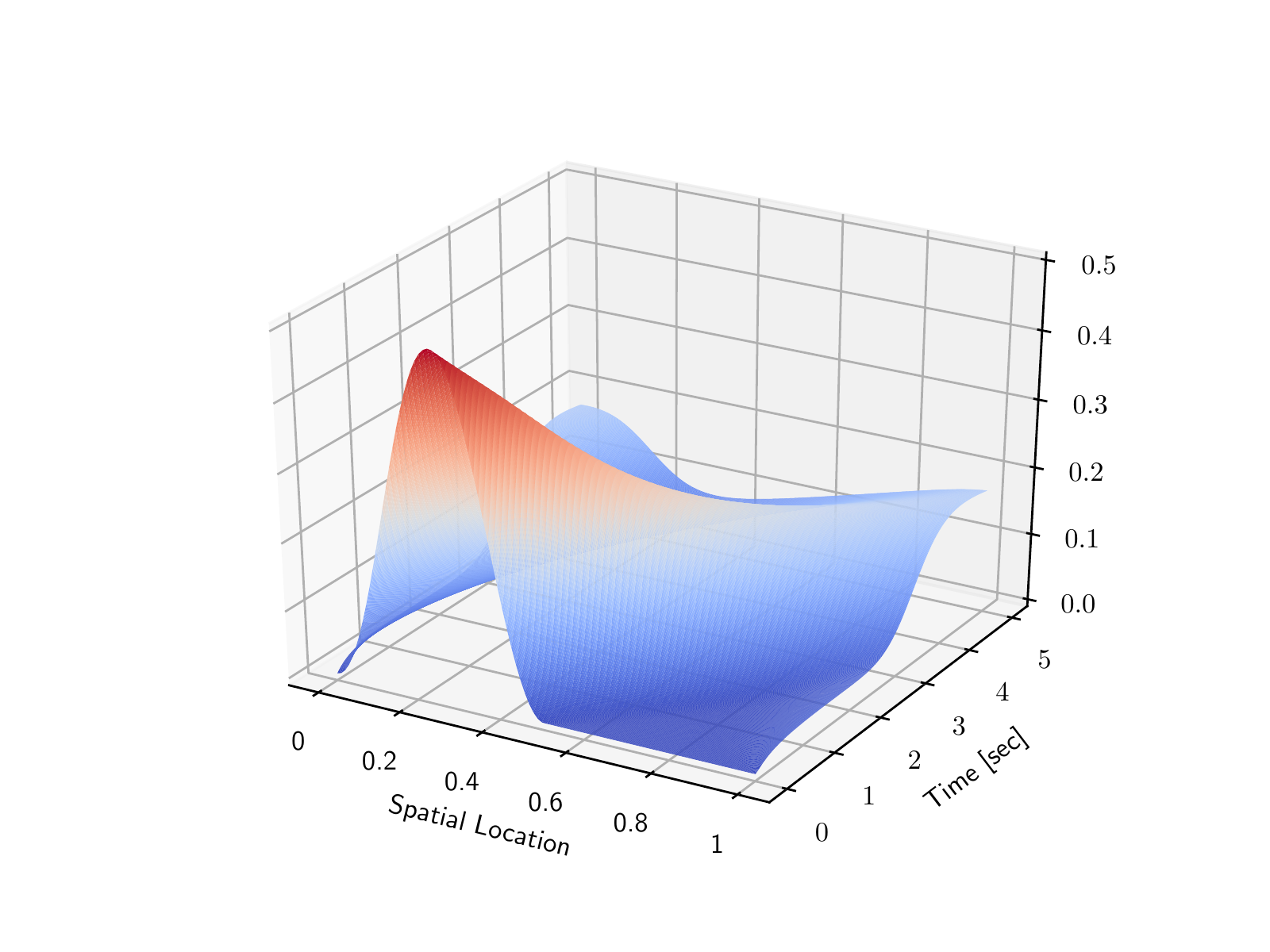}
	\caption{FOM simulation: initial condition $x_0$ and zero input.}
	\label{fig:FOMSurface}
\end{figure}

\begin{figure}[tb] \centering
	\begin{subfigure}{.45\textwidth}
		\includegraphics[width=\textwidth, trim=0 0.5cm 0 1.8cm, clip]{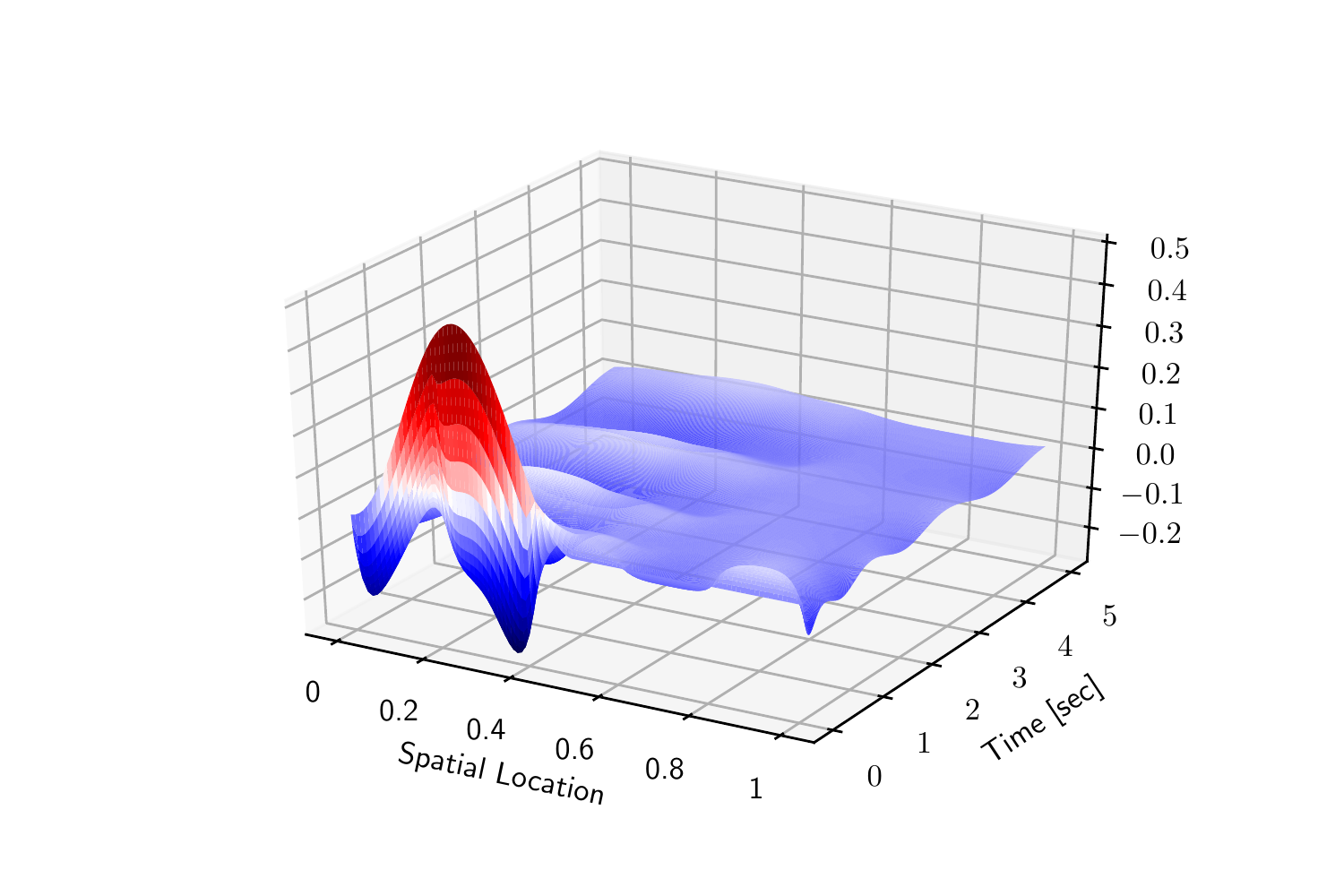}
		\caption{Standard BT}
		\label{fig:FOMwBTcontrol}
	\end{subfigure}
	\begin{subfigure}{.45\textwidth}
		\includegraphics[width=\textwidth,trim=0 0.5cm 0 1.8cm, clip]{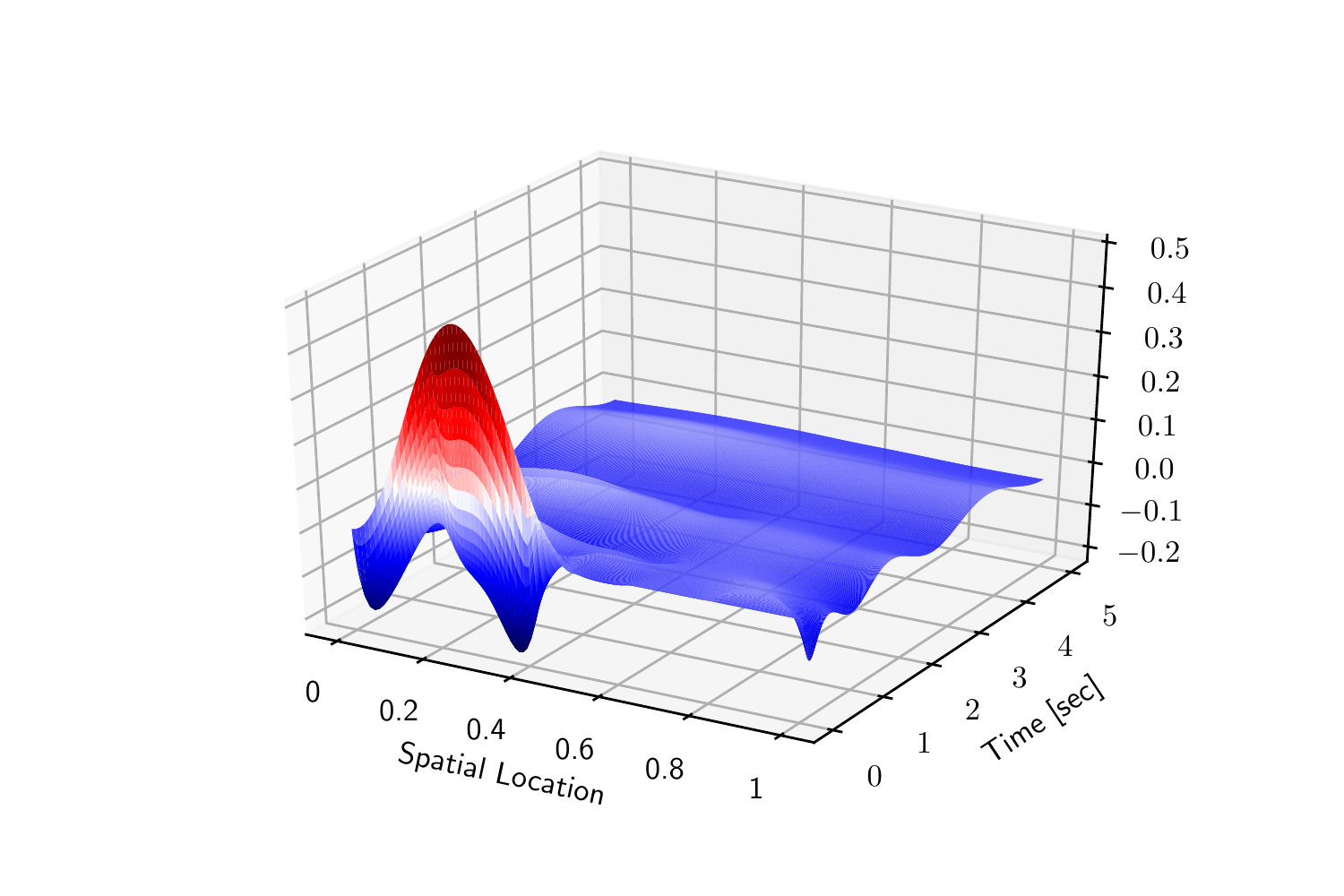}
		\caption{LQG-BT}
		\label{fig:FOMwLQGcontrol}
	\end{subfigure}
	\caption{FOM simulation: initial condition $x_0$, controlled by $u_r^{\text{BT}}$ and $u_r^{\text{LQGBT}}$ with $r=5$.}
\end{figure}

Finally, while the above results were obtained for $r=5$, Table~\ref{tbl:cost} gives an overview of results for different dimensions  $r$ of the nonlinear ROMs. We note that while there is an anomaly at $r=9$ for the BT-ROM-based controller, in general, BT-ROM controllers seem to outperform the LQG-BT ROM controllers, evidenced by a lower number of iterations, as well as FOM cost, which appears to be a result of the slow singular value decay of the LQG-BT singular values in Fig.~\ref{fig:Hankel_svals}.

\begin{table}[tb]
\caption{Comparison of cost of ROM controllers on FOM, $J(x, u_r)$, on ROM, $J(x_r, u_r)$, and number of iterations of ILQR for increasing dimension of the ROM.}
\label{tbl:cost}
\begin{center}
\setlength{\tabcolsep}{4pt} % Trick to reduce space betwee columns
\begin{tabular}{r|r|r|r|r|r|r }
 &     LQG-BT        &   LQG-BT     &       BT            &   BT                 &  LQG-BT   &    BT\\
$r$  &   $J(x_r,u_r)$ &   $J(x, u_r)$&  $J(x_r, u_r)$ &  $J(x, u_r)$   &  iter. &   iter.\\
\hline 
2  &                 			 	56.8              &   		197.3		       &				85.9       &    	245.0         &  		89		    &				58 \\
3  &                 				71.2               &  		136.6		       &				44.5       &    	101.8          & 		  58		&    			278 \\
4	&                 				45.8		      &     	89.8		       &				46.6      &     	83.0           & 		170         &          		277 \\
5  &                  				47.4		       &     	63.6           		&			53.8          & 	68.9            	&	170             &      		222 \\
6	&	             				50.5              &    		69.0            	&			44.3          &	 47.7	     		  & 1328               &    		966 \\
7	&                  				50.8              &    		61.3		    	&			43.9          & 	46.5           		&   1208           &        		861 \\
8   &                 				54.2              &    		63.3	          & 				43.5      &     	45.7	     &		  3647        &           		930 \\
9	&	              				53.3              &    		59.1          	&			272.7          &	272.1          		 & 9070               &    		101 \\
10 &                  				53.8             &     		58.8           	&				51.5         &  	51.2               &   2395           &        		2687 \\
11  &                 				60.4              &    		60.2            &				48.8        &   	48.7               &    7927          &         		4165 \\
\end{tabular}
\end{center}
\end{table}

%%%%%%%%%%%%%%%%%%%%%%%%%%%%%%%%%%%%%%%%%%%%%%%%%%%%%%%%%
\section{CONCLUSIONS AND FUTURE DIRECTIONS} \label{sec:conclusions}
%%%%%%%%%%%%%%%%%%%%%%%%%%%%%%%%%%%%%%%%%%%%%%%%%%%%%%%%%
We presented a new control framework for high-dimensional nonlinear systems by combining the iterative linear quadratic regulator (ILQR) framework and system-theoretic model reduction. As shown in the numerical experiments, both standard balanced truncation model reduction and LQG balanced truncation allowed us to compute well-performing controllers from ROMs with about $95\%$ reduction of model order compared to the full-order model.
Further work may be done by using nonlinear system-theoretic model reduction, such as \cite{benner2018mathcalh_2, antoulas2019loewner, KW2019_balanced_truncation_lifted_QB}, which despite being open-loop techniques, can provide more accurate ROMs for nonlinear systems. One would also need to investigate the observed correlation between larger state dimensions and increased iteration numbers for ILQR to understand scalability of ILQR. Lastly, convergence of ILQR for increasing $N$ remains an open problem.

%\addtolength{\textheight}{-12cm}   % This command serves to balance the column lengths
                                  % on the last page of the document manually. It shortens
                                  % the textheight of the last page by a suitable amount.
                                  % This command does not take effect until the next page
                                  % so it should come on the page before the last. Make
                                  % sure that you do not shorten the textheight too much.

%%%%%%%%%%%%%%%%%%%%%%%%%%%%%%%%%%%%%%%%%%%%%
\section*{ACKNOWLEDGMENT}
\noindent 
B.K. thanks Prof. Hermann Mena for discussions on ILQR.

%%%%%%%%%%%%%%%%%%%%%%%%%%%%%%%%%%%%%%%%%%%%%%%%%%%%%%%%%%%%%%%%%%%%%%%%%%%%%%%%
\bibliographystyle{IEEEtran}
\bibliography{bibliography_ILQR.bib}

\end{document}